\documentclass[a4paper,11pt]{article}
\title{
A Unified Maximum-Margin Framework for Data Envelopment Analysis: Theoretical Foundations and Numerical Evidence\footnote{This work was inspired by the paper \cite{mt}, which was written under the supervision of the third author.}
}
\author{
Teruki Akahoshi\footnote{Joint Graduate School of Mathematics for Innovation, Kyushu University, Fukuoka 819-0395, Japan. E-mail: akahoshi.teruki.505@s.kyushu-u.ac.jp}
,
Hiroki Ueda\footnote{Former Affiliation: Joint Graduate School of Mathematics for Innovation, Kyushu University. E-mail: hkt.hkt.fhs.kyushu@gmail.com}
~and 
Tomonari Kitahara\footnote{Faculty of Economics, Kyushu University, Fukuoka 819-0395, Japan. E-mail: tomonari.kitahara@econ.kyushu-u.ac.jp}   
}
\date{}
\usepackage[margin=25mm]{geometry}
\usepackage{amsmath,amssymb,amsfonts,amsthm}
\usepackage{graphicx}
\usepackage{multirow}

\newtheorem{proposition}{Proposition}[section]

\def\bmath#1{\mbox{\boldmath $#1$}}

\def\u{\bmath{u}}
\def\v{\bmath{v}}

\def\x{\bmath{x}}
\def\y{\bmath{y}}

\def\0{\bmath{0}}
\def\1{\bmath{1}}
\usepackage{bm}
\begin{document}
\maketitle
\begin{abstract}
Data Envelopment Analysis (DEA) is a widely used non-parametric method for evaluating the relative efficiency of decision-making units (DMUs). However, the classic Charnes-Cooper-Rhodes (CCR) model often suffers from a lack of discriminatory power, particularly when the number of input and output variables is large relative to the number of DMUs. Under such conditions, many DMUs are evaluated as ``CCR-efficient'' with a score of unity, making it difficult to rank or distinguish between top performers. To address this limitation, we propose a novel Maximum-Margin DEA (MM-DEA) model that integrates the concept of ``margin maximization''—inspired by structural risk minimization in machine learning—directly into the efficiency measurement framework. Like the traditional super-efficiency model, the proposed MM-DEA model excludes the target DMU from the reference set used to evaluate it; unlike the super-efficiency model, however, MM-DEA imposes an explicit upper bound on the target DMU's own score, ensuring that every score remains within the standard $[0,1]$ range rather than potentially exceeding one. We theoretically demonstrate that the MM-DEA model is a generalized extension of the CCR model, reducing to the latter when the trade-off parameter $\alpha$ is zero. Furthermore, we show that the fractional programming formulation of MM-DEA can be rigorously transformed into a linear programming (LP) problem, ensuring computational efficiency. In our numerical experiments, MM-DEA produced a unique ranking for a high-dimensional synthetic dataset on which the CCR model failed to discriminate among any of the DMUs. Our results suggest that the ``margin'' serves as a critical metric for managerial resilience and competitive advantage.
\end{abstract}
\section{Introduction}
Data Envelopment Analysis (DEA), first proposed by Charnes, Cooper, and Rhodes \cite{ccr}, has become a standard non-parametric tool for evaluating the relative efficiency of Decision Making Units (DMUs). By measuring the ratio of weighted outputs to weighted inputs, the classic CCR model identifies an ``efficiency frontier'' consisting of units that achieve the maximum possible performance relative to their peers.\\

Despite its widespread application, the CCR model faces a critical limitation in practical decision-making: the lack of discriminatory power. This issue, often referred to as the ``dimensionality crisis,'' occurs when the number of input and output variables is large relative to the number of DMUs. In such scenarios, a disproportionately high number of DMUs are evaluated as ``CCR-efficient'' with a score of unity. This saturation of efficient units prevents meaningful ranking and differentiation, significantly diminishing the model's utility as a decision-support tool.\\

To address this challenge, several ranking methods have been developed, most notably the super-efficiency model proposed by Andersen and Petersen \cite{ap}; alternative approaches addressing some of its drawbacks have also been proposed (e.g., \cite{maj}). By excluding the target unit from its own reference set, this model allows scores to exceed one, which resolves ties among CCR-efficient DMUs; however, this mechanism also means the model does not reduce to the CCR model and departs from the standard $[0,1]$ range of efficiency scores.\\

In this paper, we propose a novel framework called Maximum-Margin DEA (MM-DEA). Inspired by the principle of structural risk minimization in Support Vector Machines (SVM), our model redefines efficiency not just as reaching the frontier, but as maintaining a ``safety margin'' over competing DMUs. Unlike traditional ranking methods, MM-DEA incorporates this margin directly into a single-stage optimization objective.\\

The contributions of this research are three-fold:

\begin{enumerate}
\item {\bf Theoretical Innovation}: We introduce the concept of ``margin maximization'' into DEA, providing a robust metric for efficiency that accounts for competitive lead.
\item {\bf Computational Tractability}: We demonstrate that the MM-DEA formulation can be rigorously transformed into a linear programming (LP) problem, ensuring that it remains as computationally efficient as the standard CCR model.
\item {\bf Numerical Validation}: Through numerical experiments on synthetic datasets, we illustrate that MM-DEA can yield a stable and unique ranking in a setting where the CCR model fails to discriminate.
\end{enumerate}

The remainder of this paper is organized as follows. Section 2 reviews the theoretical background of the CCR model and its limitations. Section 3 presents the formal definition and LP transformation of the MM-DEA model. Section 4 evaluates the model’s performance through numerical experiments on synthetic data. Finally, Section 5 concludes the paper with a discussion on future research directions.
\section{Preliminary: The CCR Model}
In this section, we briefly review the standard Charnes-Cooper-Rhodes (CCR) model and discuss the inherent limitations regarding its discriminatory power.
\subsection{Mathematical Formulation}
Suppose there are $n$ decision making units (DMUs). 
We use $I$ to represent the set $\{1,\dots,n\}$.
For $i\in I$, the $i$-th DMU has $l$ inputs $x_{1i},~\dots,x_{li}$ and $m$ outputs $y_{1i},\dots,y_{mi}$. From these inputs and outputs, we define the input vector
\[\x_i=(x_{1i},\dots,x_{li})^{\top}
\]
and the output vector
\[
\y_i=(y_{1i},\dots,y_{mi})^{\top}.
\]
We also define the matrix $X$ which consists of the input vectors as
\[
X=[\x_1,\x_2,\dots,\x_n]
\]
and the matrix $Y$ consisting of the output vectors
\[
Y=[\y_1,\y_2,\dots,\y_n].
\]
Throughout this paper, we assume that all inputs and outputs are strictly positive, i.e., $\x_i>\0$ and $\y_i>\0$ for every $i\in I$.\\

Suppose we want to measure the efficiency of the $o$-th DMU ($o\in I$). In DEA, we measure the efficiency of the $o$-th DMU by the ratio
\[
\frac{\y_o^{\top}\v}{\x_o^{\top}\u}
\]
where $\u\in\mathbb{R}^l_+$ and $\v\in\mathbb{R}^m_+$ are weight vectors.
Under the CCR model, we seek nonnegative weights $\u$ and $\v$ that maximize the efficiency of the $o$-th DMU subject to the efficiency of each DMU being not more than one. 
To be precise, the problem
\begin{equation}
\begin{array}{ll}
\max&\theta_o=\frac{\y_o^{\top}\v}{\x_o^{\top}\u}\\
{\rm subject~to}& \frac{\y_i^{\top}\v}{\x_i^{\top}\u}\le 1,~\forall i\in I\ \\
&\u\ge\0,~\v\ge\0
\label{ccr_pure}
\end{array}
\end{equation}
is solved to find nonnegative weights $\u$ and $\v$. This model is proposed by Charnes, Cooper and Rhodes in 1978. For a comprehensive treatment of DEA methodology, see \cite{text}.\\\\ 

Note that the ratio $\frac{\y_i^{\top}\v}{\x_i^{\top}\u}$ is unchanged if we multiply $\u$ and $\v$ by some constant. Thus we can assume $\x_o^{\top}\u=1$. Then we can rewrite (\ref{ccr_pure}) to the following linear programming problem (LP).
\[
\begin{array}{ll}
\max&\theta_o=\y_o^{\top}\v\\
{\rm subject~to}&\x_o^{\top}\u=1\\
& \y_i^{\top}\v\le\x_i^{\top}\u,~\forall i\in I\ \\
&\u\ge\0,~\v\ge\0
\end{array}
\]
This problem can be expressed in a compact matrix form as follows:
\begin{equation}
\begin{array}{ll}
\max&\theta_o=\y_o^{\top}\v\\
{\rm subject~to}&\x_o^{\top}\u=1\\
& Y^T\v\le X^T\u\\
&\u\ge\0,~\v\ge\0
\label{ccr_lp}
\end{array}
\end{equation}

The optimal value $\theta_o^*$ of (\ref{ccr_lp}) is called the CCR efficiency score of the $o$-th DMU. 
If $\theta_o^*=1$, the $o$-th DMU is said to be CCR-efficient, otherwise it is said to be CCR-inefficient. 
\subsection{The Discrimination Problem}
A fundamental property of the CCR model is that it evaluates efficiency from the most favorable perspective for each DMU. While this ``optimistic'' evaluation is a strength in identifying the efficiency frontier, it frequently leads to a lack of discriminatory power. 
When the number of variables ($l+m$) is large relative to the number of DMUs ($n$), many units are able to find a set of weights that place them on the frontier, resulting in an efficiency score of exactly 1.0. These units are all categorized as ``efficient'', yet the model provides no information regarding which among them is performing superiorly. This ``saturation'' of the frontier complicates the ranking process and limits the model's effectiveness in providing actionable managerial insights. To overcome this, it is necessary to introduce a metric that measures the ``strength'' of a DMU's efficiency—a concept we define as the {\bf margin}.
\subsection{Super-Efficiency Model}
To address the discrimination problem described above, Andersen and Petersen \cite{ap} proposed the super-efficiency (SE) model, which ranks CCR-efficient DMUs by excluding the target unit from its own reference set. When evaluating $\text{DMU}_o$, the efficiency frontier is reconstructed using only the remaining units ($j \not= o$). If $\text{DMU}_o$ sufficiently outperforms this reduced peer group, it lies beyond the reconstructed frontier and receives a score greater than one.

Following the notation of Section 2.1, the input-oriented CCR super-efficiency score $\theta_o^{\text{super}}$ for $\text{DMU}_o$ is obtained by solving
$$\begin{aligned}
\text{maximize} \quad & \theta_o^{\text{super}} = \y_o^\top \v \\
\text{subject to} \quad & \x_o^\top \u = 1, \\
& Y_{-o}^\top \v \le X_{-o}^\top \u, \\
& \u \ge \0, \ \v \ge \0,
\end{aligned}$$
where $X_{-o}$ and $Y_{-o}$ denote the input and output matrices with the $o$-th column removed. Aside from this exclusion, the formulation is identical to the CCR model (\ref{ccr_lp}).\\

This self-exclusion mechanism has two consequences for the properties of the SE model. First, because the reference set is altered for each target unit, the SE model's feasible region differs from that of the CCR model; the SE model doesn't reduce to the CCR model (see also \cite{dh} for a related discussion of how excluding the target DMU alters the reference set in the envelopment form of DEA). Second, since $\theta_o^{\text{super}}$ can exceed one, the SE score no longer carries the $[0,1]$ interpretation of the standard efficiency ratio in (\ref{ccr_lp}).\\

These two points motivate the design of the MM-DEA model introduced in Section 3. As we show in Section 3.3, MM-DEA's dual problem excludes the target unit from the reference set in the same way as the SE model formulated above does; the two models differ in that MM-DEA additionally imposes an explicit constraint bounding the target DMU's own score by one, which both restores the reduction to the CCR model when $\alpha=0$ (Proposition~\ref{prop:alpha0}) and guarantees that every MM-DEA score lies within $[0,1]$.
\section{Proposed Model: Maximum-Margin DEA}
In this section, we present the formal development of the Maximum-Margin DEA (MM-DEA) model. The proposed model integrates the concept of ``margin maximization'' into the efficiency measurement framework to resolve the discrimination problem inherent in the CCR model.
\subsection{Conceptual Framework}
As discussed in Section 2, the CCR model identifies a unit as efficient if it lies on the frontier, regardless of its distance from other performers. To introduce a metric for robust efficiency, we define a ``safety margin'' $t$. 
By this margin $t$, we bound the efficiency of other DMUs less than or equal to $1-t$. 
Drawing inspiration from the Structural Risk Minimization principle in Support Vector Machines (SVM), we aim to find a set of weights that not only maximize the efficiency of the target unit but also maximize its separation from the nearest competing units. 
We introduce a trade-off parameter $\alpha \ge 0$, which balances the pursuit of absolute efficiency with the maximization of the margin.  
Thus we try to make the objective function $\theta_o+\alpha t$ as large as possible, while maintaining $\theta_o$ less than or equal to 1 and the efficiency of other DMUs less than or equal to $1-t$.
Thus the MM-DEA model is initially formulated as the following fractional programming problem:\begin{equation}
\begin{array}{ll}
\text{maximize}  & \theta_o + \alpha t \\
\text{subject to}  & \theta_o=\frac{\y_o^{\top}\v}{\x_o^{\top}\u},\\
&\theta_o\le 1,\ \\
& \frac{\y_i^{\top}\v}{\x_i^{\top}\u}\le 1-t, ~\forall i\in I\setminus \{o\}, \\
& \u\ge\0,~ \v\ge\0, \\
& 0 \le t \le 1.
\label{fraction}
\end{array}
\end{equation}
The upper bound $t\le 1$ ensures that the bound $1-t$ on competing DMUs remains nonnegative, while the lower bound $t\ge 0$ ensures that this bound does not exceed $1$, so that the margin constraint on competing DMUs is never looser than the original CCR bound of $1$. When $\alpha = 0$, the objective reduces to the standard CCR maximization, ensuring that MM-DEA is a generalized extension of the classical model.
\subsection{Transformation to Linear Programming}
To ensure computational tractability, we transform the fractional programming problem (\ref{fraction}) into an equivalent Linear Programming (LP) problem. 
Note that the technique used in this subsection is originally developed in \cite{kt}.
First, as in the standard CCR model, we can assume 
\[
\x_o^{\top}\u=1,
\]
without loss of generality. 
Next, we pose
\[
t'=1-t.
\]
Then $0\le t'\le 1$ and we can rewrite the inequalities for non-target DMUs as
\[
\y_i^{\top}\v\le (\x_i)^{\top}(t'\u),~\forall i\not=o.
\]
Thus we can transform the problem (\ref{fraction}) to the following problem.
\begin{equation}
\begin{array}{ll}
\text{maximize}  & \y_o^{\top}\v + \alpha (1-t') \\
\text{subject to}  & \x_o^{\top}\u=1,\\
&\y_o^{\top}\v\le 1,\\
& \y_i^{\top}\v\le \x_i^{\top}(t'\u), ~\forall i\in I\setminus \{o\}, \\
& \u\ge\0,~ \v\ge\0, \\
& 0 \le t' \le 1.
\label{tentative}
\end{array}
\end{equation}
Since $\alpha$ is a predetermined constant, the additive term $\alpha$ in the objective function does not affect the optimal solution and can therefore be omitted.
Finally, by introducing the variable transformation $\u' = t'\u$, problem (\ref{tentative}) is reduced to the following linear programming (LP) formulation:
\begin{equation}
\begin{array}{ll}
\text{maximize}  & \y_o^{\top}\v - \alpha t'\\
\text{subject to}  & \x_o^{\top}\u'=t',\\
&\y_o^{\top}\v\le 1,\\
& \y_i^{\top}\v\le \x_i^{\top}\u', ~\forall i\in I\setminus \{o\}, \\
& \u'\ge\0,~ \v\ge\0, \\
& 0 \le t' \le 1.
\label{model_lp}
\end{array}
\end{equation}

Before proceeding, we note that the optimal value of~(\ref{model_lp}) always lies in $[0,1]$. Indeed, the trivial solution $(\u'=\0,\v=\0,t'=0)$ is always feasible and yields an objective value of $0$, so the optimal value is at least $0$; moreover, constraint $\y_o^{\top}\v\le 1$ together with $\alpha\ge 0$ and $t'\ge 0$ gives $\y_o^{\top}\v-\alpha t'\le 1$, so the optimal value is at most $1$. This confirms that, unlike the SE model, every MM-DEA score retains the standard $[0,1]$ interpretation.\\

\begin{proposition}
\label{prop:alpha0}
When $\alpha=0$, the MM-DEA model~(\ref{model_lp}) reduces to the CCR model~(\ref{ccr_lp}).
\end{proposition}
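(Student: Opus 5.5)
The plan is to show that, with $\alpha=0$, problem~(\ref{model_lp}) has the same optimal value as~(\ref{ccr_lp}). I will compare optimal values by mapping feasible points in each direction so that the objective never decreases. With $\alpha=0$ the objective of~(\ref{model_lp}) is simply $\y_o^{\top}\v$, and the only structural differences from~(\ref{ccr_lp}) are three. The normalization reads $\x_o^{\top}\u'=t'$ with $0\le t'\le 1$ instead of $\x_o^{\top}\u=1$. The dominance constraint for $o$ itself is replaced by the cap $\y_o^{\top}\v\le 1$. The remaining dominance constraints are imposed only for $i\ne o$.

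\textbf{CCR feasible $\Rightarrow$ MM-DEA feasible.} Take $(\u,\v)$ feasible for~(\ref{ccr_lp}) and set $\u'=\u$, $t'=1$. Then $\x_o^{\top}\u'=1=t'$. The constraints for $i\ne o$ are inherited directly. The $i=o$ constraint of~(\ref{ccr_lp}) gives $\y_o^{\top}\v\le\x_o^{\top}\u=1$. Hence the point is feasible for~(\ref{model_lp}) with the same objective value, so $\mathrm{opt}(\ref{model_lp})\ge\theta_o^*$.

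\textbf{MM-DEA feasible $\Rightarrow$ CCR feasible.} This is the step I expect to be the main obstacle, because $t'$ may be strictly less than $1$ or even $0$. Take $(\u',\v,t')$ feasible for~(\ref{model_lp}). Since every optimal value involved is at least $0$ (the zero solution is feasible in both problems), I only need to handle points with $\y_o^{\top}\v>0$.

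First I rule out $t'=0$ for such points. If $t'=0$, then $\x_o>\0$ and $\x_o^{\top}\u'=0$ force $\u'=\0$. Then $\y_i^{\top}\v\le 0$ for $i\ne o$, and since $\y_i>\0$ this forces $\v=\0$, provided $n\ge 2$. That contradicts $\y_o^{\top}\v>0$. (If $n=1$, CCR gives $\theta_o^*=1$ trivially, and MM-DEA also attains $1$ by taking $t'=0$ and $\v$ scaled so that $\y_o^{\top}\v=1$; I will note this edge case separately.)

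So assume $t'\in(0,1]$ and set $\u=\u'/t'$, keeping $\v$. Then $\x_o^{\top}\u=1$. For $i\ne o$, $\y_i^{\top}\v\le\x_i^{\top}\u'=t'\,\x_i^{\top}\u\le\x_i^{\top}\u$, using $t'\le 1$ and $\x_i^{\top}\u\ge 0$. For $i=o$, $\y_o^{\top}\v\le 1=\x_o^{\top}\u$. Thus $(\u,\v)$ is CCR-feasible with the same objective $\y_o^{\top}\v$, giving $\mathrm{opt}(\ref{model_lp})\le\theta_o^*$.

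Combining the two directions, the optimal values coincide. Moreover, the constructed maps carry optimal solutions to optimal solutions: in one direction $(\u,\v)\mapsto(\u,\v,1)$, in the other $(\u',\v,t')\mapsto(\u'/t',\v)$. Hence~(\ref{model_lp}) with $\alpha=0$ reproduces the CCR score and CCR weights, which is what ``reduces to'' means here.
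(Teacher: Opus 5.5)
Your proof is correct and takes essentially the same approach as the paper. The paper also shows that $t'=0$ forces the trivial point $(\0,\0,0)$, rescales $\u'/t'$ to turn any feasible point with $t'>0$ into one with $t'=1$ and the same objective, and then notes that with $\x_o^{\top}\u'=1$ the cap $\y_o^{\top}\v\le 1$ is exactly the missing $i=o$ CCR constraint; your two-way feasibility maps are the same argument, and your separate treatment of $n=1$ covers a case the paper's $t'=0$ step tacitly excludes, since concluding $\v=\0$ there needs $I\setminus\{o\}\neq\emptyset$.
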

 
\begin{proof}
Suppose $\alpha=0$.
We refer to the constraints of~(\ref{model_lp}) as follows:
\begin{align*}
\text{(C1)} \quad & \x_o^{\top}\u'=t', \\
\text{(C2)} \quad & \y_o^{\top}\v\le 1, \\
\text{(C3)} \quad & \y_i^{\top}\v\le\x_i^{\top}\u', ~\forall i\in I\setminus \{o\}.
\end{align*}
We show that, without loss of generality, we may restrict attention to feasible solutions
of~(\ref{model_lp}) with $t'=1$. We consider the cases $t'=0$ and $t'>0$ separately.\\

\noindent\textbf{Case $t'=0$.}
Let $(\u',\v,t')$ be any feasible solution of~(\ref{model_lp}).
Since we will show below that every feasible solution with $t'>0$ can be normalized to
one with $t'=1$ without decreasing the objective value, it suffices to check that any
value attainable with $t'=0$ is also attainable among solutions with $t'=1$; this shows that solutions with $t'=0$ may be ignored when computing the optimal value of~(\ref{model_lp}).\\

If $t'=0$, then (C1) together with $\x_o>\0$ and $\u'\ge\0$ forces $\u'=\0$.
Then (C3) gives $\y_i^{\top}\v\le0$ for all $i\in I\setminus \{o\}$, and since $\y_i>\0$ and $\v\ge\0$, we obtain $\v=\0$; combined with $\u'=\0$, this shows that $t'=0$ admits the unique feasible solution $(\u',\v,t')=(\0,\0,0)$, whose objective value is zero.
It remains to exhibit a feasible solution with $t'=1$ attaining objective value zero.
Taking any $\bar{\u}\ge\0$ with $\x_o^{\top}\bar{\u}=1$, the point $(\u'=\bar{\u},\ \v=\0,\ t'=1)$ is feasible for~(\ref{model_lp})---(C1) holds since $\x_o^{\top}\bar{\u}=1$, (C2) holds trivially, and (C3) holds since $\y_i^{\top}\0=0\le\x_i^{\top}\bar{\u}$ for $\bar{\u}\ge\0,\ \x_i>\0$---and also attains objective value zero.
Hence the value $0$ obtained when $t'=0$ does not exceed the value attainable with $t'=1$, so solutions with $t'=0$ may be ignored in what follows.\\

\noindent\textbf{Case $t'>0$.}
Let $(\u',\v,t')$ be any feasible solution of~(\ref{model_lp}) with $t'>0$.
Define
\[
\hat{\u}'=\frac{\u'}{t'},\qquad\hat{\v}=\v.
\]
We verify that $(\hat{\u}',\hat{\v},1)$ is feasible for~(\ref{model_lp}).
(C1) is satisfied since $\x_o^{\top}\hat{\u}'=\frac{1}{t'}\x_o^{\top}\u'=\frac{t'}{t'}=1$.
(C2) holds because $\y_o^{\top}\hat{\v}=\y_o^{\top}\v\le 1$.
For each $i\in I\setminus \{o\}$, (C3) is satisfied since
\[
\y_i^{\top}\hat{\v}=\y_i^{\top}\v\le\x_i^{\top}\u'\le\frac{1}{t'}\x_i^{\top}\u'=\x_i^{\top}\hat{\u}',
\]
where the first inequality follows from the feasibility of $(\u',\v,t')$ and the second
from $t'\le 1$, which implies $1/t'\ge 1$.
Finally, $t'=1$ satisfies $0\le t'\le 1$.
Moreover, the objective value is unchanged: $\y_o^{\top}\hat{\v}=\y_o^{\top}\v$.
Hence any feasible solution with $t'>0$ can be replaced by a feasible solution with
$t'=1$ having the same objective value.\\

In either case, therefore, the optimal value of~(\ref{model_lp}) with $\alpha=0$ is
attained at $t'=1$.\\

Setting $t'=1$ in~(\ref{model_lp}) with $\alpha=0$ yields
\[
\begin{array}{ll}
\text{maximize}  & \y_o^{\top}\v\\
\text{subject to}  & \x_o^{\top}\u'=1,\\
&\y_o^{\top}\v\le 1,\\
& \y_i^{\top}\v\le \x_i^{\top}\u', ~\forall i\in I\setminus \{o\}, \\
& \u'\ge\0,~ \v\ge\0.
\end{array}
\]
Note that (C3) is stated only for $i\in I\setminus\{o\}$; however, the inequality obtained by formally extending it to $i=o$, namely $\y_o^{\top}\v\le\x_o^{\top}\u'$, holds automatically here, since $\x_o^{\top}\u'=1$ by the first constraint and $\y_o^{\top}\v\le 1$ by the second.
Thus this problem is identical to the CCR model (\ref{ccr_lp}). 
\end{proof}
\subsection{Dual Problem}
The dual problem of the MM-DEA model (\ref{model_lp}) provides an alternative geometric
interpretation of efficiency evaluation.
By associating dual variables $\xi$, $\tau$, $\bm{\lambda}_{-o}$, and $\gamma$ with the constraints of
(\ref{model_lp}), the dual problem is derived as follows:
\begin{equation}
\begin{array}{ll}
\text{minimize}  & \tau + \gamma\\
\text{subject to}  & Y_{-o}\bm{\lambda}_{-o}\ge(1-\tau)\y_o,\\
& X_{-o}\bm{\lambda}_{-o}\le\xi\x_o,\\
& \xi\le\alpha+\gamma,\\
& \tau\ge 0,~\gamma\ge 0,~\bm{\lambda}_{-o}\ge\0.
\label{dual}
\end{array}
\end{equation}
where $\xi\in\mathbb{R}$, $\tau\in\mathbb{R}$, $\gamma\in\mathbb{R}$, and $\bm{\lambda}_{-o}\in\mathbb{R}^{n-1}$ are variables,
$\bm{\lambda}_{-o}$ is the vector of reference weights for the DMUs other than the $o$-th DMU,
and $Y_{-o}\in\mathbb{R}^{m\times(n-1)}$ and $X_{-o}\in\mathbb{R}^{l\times(n-1)}$
denote the output and input matrices with the $o$-th DMU excluded, respectively.\\
 
To interpret this formulation, it is instructive to compare it with the dual of the
standard CCR model (\ref{ccr_lp}):
\[
\begin{array}{ll}
\text{minimize}  & \theta\\
\text{subject to}  & Y\bm{\lambda}\ge\y_o,\\
& X\bm{\lambda}\le\theta\x_o,\\
& \bm{\lambda}\ge\0.
\end{array}
\]
where $\theta\in\mathbb{R}$ and $\bm{\lambda}\in\mathbb{R}^{n}$ are variables.
In the CCR dual, $\bm{\lambda}$ represents the weights of a reference set of all DMUs, and the model
seeks the minimum efficiency score $\theta$ such that the reference set can replicate the
output of the $o$-th DMU using no more than $\theta$ times its input.
The MM-DEA dual retains this basic structure but introduces several notable differences,
which we now discuss in turn.\\

First, the target $o$-th DMU is excluded from the reference set
(i.e., $Y_{-o}$ and $X_{-o}$ instead of $Y$ and $X$, and $\bm{\lambda}_{-o}\in\mathbb{R}^{n-1}$
instead of $\bm{\lambda}\in\mathbb{R}^{n}$), the same exclusion structure as the SE model in Section 2.3.\\

Furthermore, the objective function $\tau + \gamma$ plays the role of $\theta$ in the CCR dual: it represents the MM-DEA evaluation score of the $o$-th DMU, as confirmed by strong duality. In the following, an asterisk is used as a superscript to denote an optimal solution. By the complementary slackness conditions for the primal constraints $\y_o^{\top}\v^{*}\le 1$ and $t'^{*}\le 1$, we have $\tau^{*}(1-\y_o^{\top}\v^{*})=0$ and $\gamma^{*}(1-t'^{*})=0$. In particular, whenever a positive margin is achieved at optimality ($t'^{*}<1$), the latter condition forces $\gamma^{*}=0$, so that the evaluation score $\tau^{*}+\gamma^{*}$ reduces to $\tau^{*}$ alone.\\

In addition, the constraint $Y_{-o}\bm{\lambda}_{-o}\ge(1-\tau)\y_o$ requires the reference set to achieve an output level of $(1-\tau)\y_o$ rather than the full $\y_o$ as in the CCR dual. Since $\gamma^{*}\ge 0$, strong duality gives
\[
1-\tau^{*}\ \ge\ 1-(\tau^{*}+\gamma^{*})\ =\ 1-\y_o^{\top}\v^{*}+\alpha t'^{*}\ \ge\ \alpha t'^{*},
\]
where the last inequality uses $\y_o^{\top}\v^{*}\le 1$. Thus the required output level $(1-\tau^{*})\y_o$ is always bounded below by $\alpha t'^{*}\y_o$, and this lower bound decreases as the margin $t$ grows (equivalently, as $t'=1-t$ decreases), meaning that a DMU with a larger margin faces a less demanding lower bound on the output the reference set must achieve in the dual.\\

In particular, this bound is attained with equality when $t'^{*}<1$ and $\tau^{*}>0$ simultaneously hold at optimality. Indeed, $t'^{*}<1$ forces $\gamma^{*}=0$ via the complementary slackness condition $\gamma^{*}(1-t'^{*})=0$, so that strong duality reduces to $\tau^{*}=\y_o^{\top}\v^{*}-\alpha t'^{*}$; and $\tau^{*}>0$ forces $\y_o^{\top}\v^{*}=1$ via the complementary slackness condition $\tau^{*}(1-\y_o^{\top}\v^{*})=0$. Combining these two facts yields $1-\tau^{*}=\alpha t'^{*}$ exactly.\\

Finally, the constraints $X_{-o}\bm{\lambda}_{-o}\le\xi\x_o$ and $\xi\le\alpha+\gamma$ together limit the
input scaling of the reference set to at most $\alpha+\gamma$ times the input of the $o$-th DMU.
This parallels the role of $\theta$ in the CCR dual, but with the upper bound
$\alpha+\gamma$ controlled by the trade-off parameter $\alpha$ and the dual variable
$\gamma$.
A larger $\alpha$ places greater emphasis on margin maximization in the primal, which in the
dual corresponds to allowing the reference set more input resources. 

\section{Computational Experiments with Synthetic Data}
In this section, we investigate the fundamental properties of the MM-DEA model through numerical experiments on two synthetic datasets of increasing dimensionality.
The first is a simple, geometrically transparent setting with nine DMUs and a single input, designed to illustrate how the margin resolves ties among CCR-efficient units and to visualise how scores evolve as $\alpha$ increases.
The second mimics a high-dimensional setting in which the CCR model saturates entirely, and serves to demonstrate that MM-DEA can provide a complete and stable ranking even in this extreme case.

\subsection{Experiment 1: Single-Input Setting}

Table~\ref{tab:data1} presents a dataset of nine DMUs (A--I), each consuming one unit of input
and producing two outputs.
The CCR model identifies four units---A, C, H, and I---as efficient, all receiving a score of 1.0000;
the remaining five are evaluated below unity (Table~\ref{tab:data1}).
Since all four efficient units are assigned the same score, the CCR model cannot rank them.

\begin{table}[ht]
\centering
\caption{Synthetic dataset 1: inputs, outputs, and CCR efficiency scores}
\label{tab:data1}
\begin{tabular}{l|ccccccccc}
\hline
DMU          & A & B & C & D & E & F & G & H & I \\
\hline
Input $x$    &  1 &  1 &  1 &  1 &  1 &  1 &  1 &  1 &  1 \\
\hline
Output $y_1$ &  1 &  2 &  3 &  4 &  4 &  5 &  6 &  7 &  8 \\
Output $y_2$ & 10 &  6 &  9 &  7 &  5 &  3 &  5 &  6 &  2 \\
\hline
CCR          & 1.0000 & 0.6667 & 1.0000 & 0.8889 & 0.7111 & 0.6765 & 0.8529 & 1.0000 & 1.0000 \\
\hline
\end{tabular}
\end{table}

Table~\ref{tab:scores1} reports the MM-DEA scores for several values of $\alpha$.
At $\alpha=0$ the scores coincide with those of the CCR model, in accordance with
Proposition~\ref{prop:alpha0}.
As $\alpha$ increases from zero, the scores decrease monotonically for every DMU, and
the four previously tied efficient units receive distinct values.
At $\alpha=0.1$, for instance, they are ranked $\mathrm{H}>\mathrm{I}>\mathrm{A}>\mathrm{C}$
with scores $0.9152$, $0.9125$, $0.9100$, and $0.9030$, respectively.
This ordering reflects the competitive lead each unit maintains over the remaining DMUs---a
quantity that the CCR model, by construction, cannot capture.\\

Figure~\ref{fig:exp1} provides a visual summary of these results.
Panel~(a) traces the MM-DEA score as a function of $\alpha$ for the four CCR-efficient units
together with two representative inefficient units.
The efficient units (A, C, H, I) start at $1.0000$ when $\alpha=0$ and decrease at differing
rates, illustrating how the margin can distinguish competitive positions that are indistinguishable to the CCR model.
Panel~(b) compares the CCR and MM-DEA ($\alpha=0.1$) scores side by side; the hatched bars
highlight the four CCR-efficient units, whose scores are clearly differentiated by the MM-DEA model.

\begin{figure}[ht]
\centering
\includegraphics[width=\textwidth]{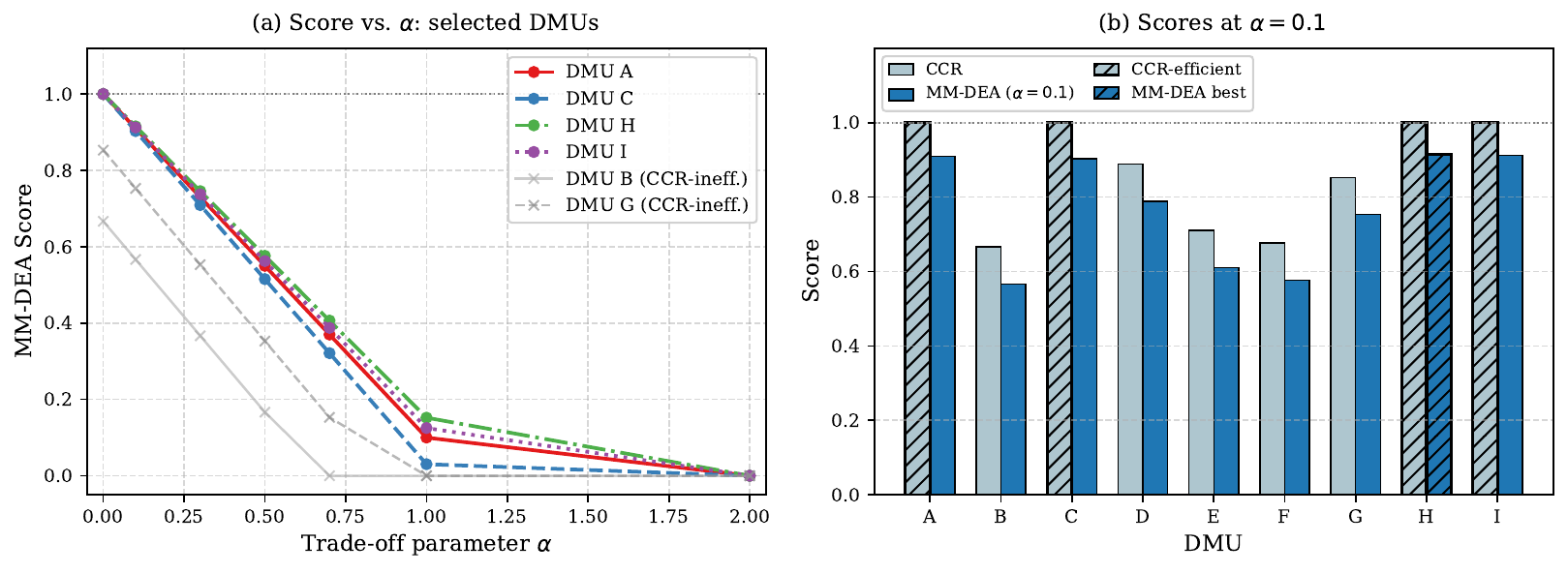}
\caption{Experiment 1: (a)~MM-DEA score as a function of~$\alpha$ for four CCR-efficient units (A, C, H, I) and two CCR-inefficient units (B, G); (b)~side-by-side CCR and MM-DEA scores at $\alpha=0.1$ (hatched bars: CCR-efficient units; single hatched bar: MM-DEA best performer).}
\label{fig:exp1}
\end{figure}

\begin{table}[ht]
\centering
\caption{CCR and MM-DEA scores --- Experiment 1}
\label{tab:scores1}
\begin{tabular}{l|ccccccccc}
\hline
DMU          & A & B & C & D & E & F & G & H & I \\
\hline
CCR          & 1.0000 & 0.6667 & 1.0000 & 0.8889 & 0.7111 & 0.6765 & 0.8529 & 1.0000 & 1.0000 \\
\hline
$\alpha=0$   & 1.0000 & 0.6667 & 1.0000 & 0.8889 & 0.7111 & 0.6765 & 0.8529 & 1.0000 & 1.0000 \\
$\alpha=0.1$ & 0.9100 & 0.5667 & 0.9030 & 0.7889 & 0.6111 & 0.5765 & 0.7529 & 0.9152 & 0.9125 \\
$\alpha=0.3$ & 0.7300 & 0.3667 & 0.7091 & 0.5889 & 0.4111 & 0.3765 & 0.5529 & 0.7457 & 0.7375 \\
$\alpha=0.5$ & 0.5500 & 0.1667 & 0.5152 & 0.3889 & 0.2111 & 0.1765 & 0.3529 & 0.5761 & 0.5625 \\
$\alpha=0.7$ & 0.3700 & 0.0000 & 0.3212 & 0.1889 & 0.0111 & 0.0000 & 0.1529 & 0.4065 & 0.3875 \\
$\alpha=1.0$ & 0.1000 & 0.0000 & 0.0303 & 0.0000 & 0.0000 & 0.0000 & 0.0000 & 0.1522 & 0.1250 \\
$\alpha=2.0$ & 0.0000 & 0.0000 & 0.0000 & 0.0000 & 0.0000 & 0.0000 & 0.0000 & 0.0000 & 0.0000 \\
\hline
\end{tabular}
\end{table}

\subsection{Experiment 2: High-Dimensional Setting}

To examine the model's behaviour in a higher-dimensional setting, we consider ten DMUs with
four inputs and four outputs each (Table~\ref{tab:data2}).
When the number of input and output variables is large relative to the number of DMUs, the CCR model suffers from a lack of discriminatory power. In this example, all ten DMUs are evaluated as CCR-efficient with a score of 1.0000 (Table~\ref{tab:data2}), and therefore the CCR model fails to provide any meaningful ranking among them.

\begin{table}[ht]
\centering
\caption{Synthetic dataset 2: inputs, outputs, and efficiency scores}
\label{tab:data2}
\resizebox{\textwidth}{!}{%
\begin{tabular}{l|cccccccccc}
\hline
DMU          & DMU1 & DMU2 & DMU3 & DMU4 & DMU5 & DMU6 & DMU7 & DMU8 & DMU9 & DMU10 \\
\hline
Input 1  &  5 &  3 &  2 &  8 & 10 &  4 &  9 &  1 &  8 &  3 \\
Input 2  &  5 &  8 &  2 &  3 &  6 &  8 &  3 &  4 &  2 & 10 \\
Input 3  &  7 &  7 & 10 &  2 &  8 &  7 &  5 &  5 &  7 &  2 \\
Input 4  &  2 &  7 &  7 &  8 &  2 &  8 &  4 &  9 &  7 &  7 \\
\hline
Output 1 &  4 &  1 &  2 &  7 & 10 &  5 &  7 &  5 &  7 &  8 \\
Output 2 &  2 &  9 & 10 &  6 &  3 &  9 &  8 &  9 &  4 &  1 \\
Output 3 &  7 &  7 &  3 &  3 &  1 &  9 &  2 &  4 &  5 &  1 \\
Output 4 &  2 &  1 &  6 &  1 &  9 &  9 &  9 &  1 &  8 &  5 \\
\hline
CCR          & 1.0000 & 1.0000 & 1.0000 & 1.0000 & 1.0000 & 1.0000 & 1.0000 & 1.0000 & 1.0000 & 1.0000 \\
\hline
$\alpha=0$   & 1.0000 & 1.0000 & 1.0000 & 1.0000 & 1.0000 & 1.0000 & 1.0000 & 1.0000 & 1.0000 & 1.0000 \\
$\alpha=0.1$ & 0.9679 & 0.9075 & 0.9601 & 0.9540 & 0.9627 & 0.9471 & 0.9452 & 0.9680 & 0.9518 & 0.9610 \\
\hline
\end{tabular}%
}
\end{table}
\vspace{2ex}

Applying MM-DEA with $\alpha=0.1$ yields ten distinct scores and thus a complete ranking
of the DMUs (Table~\ref{tab:data2}).
The results are consistent with the model's role as a generalisation of the CCR model:
at $\alpha=0$ all scores equal 1.0000, consistent with Proposition~\ref{prop:alpha0}, while in this example a small positive $\alpha$ already suffices to resolve all ties.\\

Figure~\ref{fig:exp2} visualises the outcome for Experiment~2.
Panel~(a) traces the MM-DEA score as a function of $\alpha$ for five representative DMUs.
The gap between top and bottom performers widens as $\alpha$ grows, demonstrating that
a larger margin parameter imposes a progressively stricter efficiency evaluation and thereby
amplifies the discriminatory power of the model.
Panel~(b) displays the ten DMUs sorted in descending order of their MM-DEA scores at $\alpha=0.1$,
revealing a clear hierarchy that is entirely invisible to the CCR model.

\begin{figure}[ht]
\centering
\includegraphics[width=\textwidth]{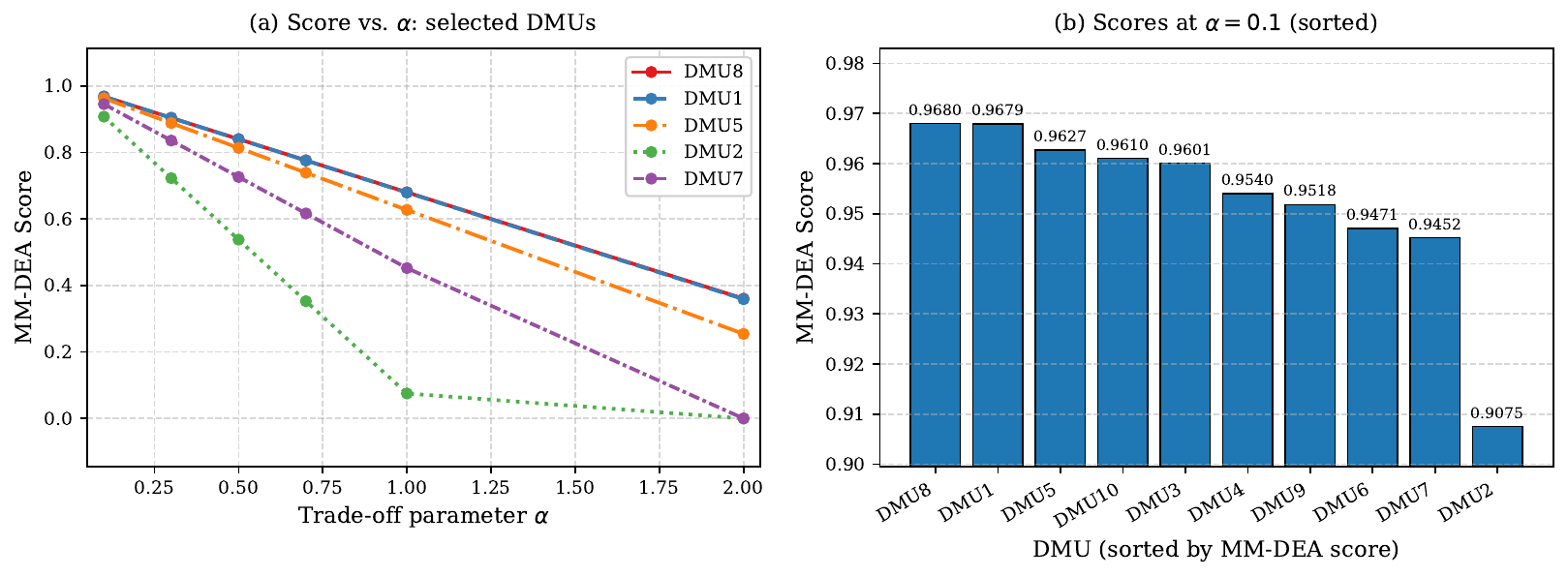}
\caption{Experiment 2: (a)~MM-DEA score as a function of~$\alpha$ for five selected DMUs; (b)~MM-DEA scores at $\alpha=0.1$ sorted in descending order (all CCR scores equal 1.0000).}
\label{fig:exp2}
\end{figure}

\subsection{Discussion}

The results from both synthetic experiments illustrate how the MM-DEA model can alleviate the ``saturation'' problem of the CCR efficiency frontier.
By incorporating the margin maximisation term, the model provides a more nuanced evaluation that rewards DMUs not just for being efficient, but for being \emph{robustly} efficient---that is, for maintaining a significant competitive lead over the reference set.

A notable observation in these two examples is that even a small value of $\alpha$ (e.g., $\alpha=0.1$) was enough to break all ties among CCR-efficient units in both datasets, while preserving the relative ordering of CCR-inefficient units.
For applications requiring a finer sensitivity to competitive margins, larger values of $\alpha$ may be employed, at the cost of reducing the absolute score for all DMUs.

\section{Conclusion}
In this paper, we proposed the Maximum-Margin DEA (MM-DEA) model as a novel framework to enhance the discriminatory power of efficiency analysis. By integrating the concept of ``margin maximization''—a core principle in Structural Risk Minimization—directly into the DEA objective function, we provided a mechanism that can rank Decision Making Units (DMUs) that are indistinguishable under the standard CCR model.\\

Our theoretical analysis demonstrated that the MM-DEA model is a generalized extension of the CCR model, reducing to it when the trade-off parameter $\alpha$ is set to zero. Furthermore, we showed that the initial fractional programming formulation can be rigorously transformed into a linear programming problem through a specialized variable transformation, ensuring computational efficiency for practical applications.\\

The numerical experiments on two synthetic datasets illustrate the potential of the proposed model. In both examples, MM-DEA differentiated among CCR-efficient units by rewarding those that maintain a significant competitive lead (margin) over their peers. This provides decision-makers with a ranking tool whose scores retain the familiar $[0,1]$ interpretation of the CCR model, in contrast to the super-efficiency model, whose scores can exceed one.\\

Despite these contributions, several avenues for future research remain. First, the systematic determination of the optimal trade-off parameter $\alpha$ remains an open question; future work could explore data-driven approaches, such as cross-validation, to select $\alpha$ based on the specific characteristics of the dataset. Second, while this study focused on the CCR model, the margin maximization framework could be extended to other DEA variants, including the BCC model for variable returns to scale or Slack-Based Measure (SBM) models. Finally, a more rigorous theoretical comparison between MM-DEA and Andersen-Petersen super-efficiency models would further clarify the mathematical conditions under which these rankings converge or diverge.\\

In conclusion, the MM-DEA model represents a significant step toward more precise efficiency measurement, bridging the gap between mathematical optimization and robust managerial decision-making.


\begin{thebibliography}{99}
\bibitem{ap} Andersen, P., and Petersen, N.C. 
A Procedure for Ranking Efficient Units in Data Envelopment Analysis.
{\it Management Science}, 39, 1261-1264 (1993). 
%
\bibitem{ccr} Charnes, A., Cooper, W.W., and Rhodes, E. 
Measuring the Efficiency of Decision Making Units. 
{\it European Journal of Operations Research}, 2, 429-444 (1978).
\bibitem{text} Cooper, W.W., Seiford, L.M., and Tone, K. 
A Comprehensive Text with Models, Applications, References and DEA-Solver Software. Springer New York, NY (2007).
%
\bibitem{dh}
Dul\'{a}, J.H. and Hickman, B.L. 
Effects of excluding the column being scored from the DEA envelopment LP technology matrix.
{\it Journal of the Operational Research Society}, 48, 1001-1012 (2017).
%
\bibitem{kt} Kitahara, T. and Tsuchiya, T. Enhancing Top Efficiency by Minimizing Second-Best Scores: A Novel Perspective on Super Efficiency Models in DEA, arXiv:2411.00438v1  (2024).
%
\bibitem{maj} Mehrabian, S., Alirezaee, M.R. and Jahanshahloo, G.R. 
A Complete Efficiency Ranking of Decision Making Units in Data Envelopment Analysis. 
{\it Computational Optimization and Applications} 14, 261–266 (1999).
%
\bibitem{mt} Ueda, H., Development and Analysis of a Maximum-Margin DEA Model for Refining Efficiency Scores (in Japanese). 
Master thesis, Joint Graduate School of Mathematics for Innovation, Kyushu University (2026).
\end{thebibliography}
\end{document}